\newtheorem{theorem}{Theorem}
\newtheorem{lemma}[theorem]{Lemma}
\newtheorem{corollary}[theorem]{Corollary}
\newtheorem{defn}[theorem]{Definition}
\newtheorem{assumption}[theorem]{Assumption}
\newtheorem{question}[theorem]{Question}
\newcommand{\cB}{\mathcal{B}}
\newcommand{\oA}{\overline{A}}
\newcommand{\oS}{\overline{S}}
\newcommand{\N}{\mathbb{N}}
\newcommand{\R}{\mathbb{R}}
\newcommand{\E}{\mathbb{E}}
\newcommand{\eps}{\epsilon}
\newcommand{\cD}{\mathcal{D}}
\newcommand{\cF}{\mathcal{F}}
\newcommand{\bone}{\mathbf{1}}
\newcommand{\of}{\overline{f}}
\newcommand{\bx}{\mathbf{x}}
\newcommand{\by}{\mathbf{y}}
\newcommand{\obx}{\overline{\bx}}
\newcommand{\ox}{\overline{x}}
\newcommand{\oy}{\overline{y}}
\newcommand{\oby}{\overline{\by}}
\newcommand{\ochi}{\overline{\chi}}
\DeclareMathOperator{\disc}{disc}
\newcommand{\cM}{\mathcal{M}}
\newcommand{\ignore}[1]{}
\title{Discrepancy Beyond Additive Functions \\ with Applications to Fair Division}
\author{
Alexandros Hollender\\University of Oxford
\and
Pasin Manurangsi\\Google Research
\and
Raghu Meka\\UCLA
\and
Warut Suksompong\\National University of Singapore
}
\date{\vspace{-10mm}}
\begin{document}
\maketitle

\begin{abstract}
We consider a setting where we have a ground set $\cM$ together with real-valued set functions $f_1, \dots, f_n$, and the goal is to partition $\cM$ into two sets $S_1,S_2$ such that $|f_i(S_1) - f_i(S_2)|$ is small for every~$i$. 
Many results in discrepancy theory can be stated in this form with the functions $f_i$ being additive. 
In this work, we initiate the study of the unstructured case where $f_i$ is \emph{not} assumed to be additive. 
We show that even without the additivity assumption, the upper bound remains at most $O(\sqrt{n \log n})$. 

Our result has implications on the fair allocation of indivisible goods.
In particular, we show that a consensus halving up to $O(\sqrt{n \log n})$ goods always exists for $n$ agents with monotone utilities. 
Previously, only an $O(n)$ bound was known for this setting.
\end{abstract}

\section{Introduction}

The area of combinatorial discrepancy traditionally studies questions of the following form: Given a hypergraph (or equivalently, set systems) $G = (V, E)$, find a $2$-coloring of the vertices such that the largest difference between the numbers of vertices of each color in a hyperedge is minimized. 
This minimum is referred to as the \emph{discrepancy} of the hypergraph. 
Discrepancy and its generalizations to questions in \emph{vector balancing} have been extensively studied in combinatorics with many applications \citep{BeckFi81,Spencer85, Banaszczyk98, Bansal10, Rot13, LovettMe15}. 
Of particular relevance is the seminal result of \citet{Spencer85}, which shows that the discrepancy of any hypergraph with at most $n$ hyperedges\footnote{We use $n$ to denote the number of hyperedges (instead of the number of vertices) in order to be consistent with the fair division literature, which we will discuss later (see, e.g., \citep{ManurangsiSu22}).} is at most $O(\sqrt{n})$, regardless of the number of vertices. 

Instead of considering \emph{the number of vertices in a hyperedge} of each color, we can consider any set function. 
Specifically, instead of hyperedges, we are given set functions $f_1, \dots, f_n$ and asked to find a coloring that minimizes the maximum difference of $f_i$ evaluated on the sets corresponding to the two colors.
This is formalized below,\footnote{Here and throughout, we write a subset and its indicator vector interchangeably when there is no ambiguity.} 
where $[r]:= \{1,\dots,r\}$ for any positive integer~$r$.

\begin{defn}
Let $\cF = \{f_1,\ldots,f_n: \{0,1\}^m \rightarrow \R\}$ be a family of functions.
The \emph{discrepancy} of $\cF$ is defined as
\begin{align*}
\disc(\cF) := \min_{\chi: [m] \to [2]} \max_{i \in [n]} |f_i(\chi^{-1}(1)) - f_i(\chi^{-1}(2))|.
\end{align*}
\end{defn}

Observe that in the classic setting of hypergraphs, $m$ is the number of vertices and each hyperedge $e$ corresponds to the function $f_e(S) = |S \cap e|$. 
Such a function $f_e$ is \emph{additive}, i.e., $f(S_1 \cup S_2) = f(S_1) + f(S_2)$ for any disjoint sets $S_1, S_2$.
Spencer's classical result extends beyond hypergraphs to the setting where each function $f_i(S) = \sum_{g \in S} a_{ig}$ is defined by coefficients $a_{ig} \in [-1,1]$. 
Note that in this more general setting, the functions $f_i$ are still additive.
Another additive case that has been investigated previously is when $f$ corresponds to a probability measure, for example, in geometric discrepancy \citep{Matousek99}.

A well-studied generalization is \emph{vector balancing}. Given vectors $v_1, \dots, v_m \in \mathbb{R}^n$ and a semi-norm $\phi: \mathbb{R}^n \rightarrow \mathbb{R}$, the objective is to find a coloring $\chi: [m] \rightarrow \{1, 2\}$ that minimizes
$$\phi\left(\sum_{g \in \chi^{-1}(1)} v_g - \sum_{g \in \chi^{-1}(2)} v_g \right).$$
For a detailed history of this problem, we refer the reader to the papers by \citet{Banaszczyk98} and \citet{Rot17} as well as the references therein. 
Spencer's theorem, for example, is an instance of this problem where the vectors $v_g$ have entries in $[-1,1]$ and $\phi$ is the $\ell_\infty$-norm.
Although the objective function $\phi(\cdot)$ is not required to be additive, an additive structure is still inherent to this formulation: vectors are first summed within each partition, and the semi-norm is then applied to the outcome. Most existing results in discrepancy theory apply to settings that are implicitly additive in this manner.

With this in mind, it is natural to extend discrepancy theory to functions $f_i$ that are \emph{not necessarily additive}. This generalization not only links to new problems in combinatorics and fair division, but also introduces significant challenges, as standard techniques like the \emph{partial coloring method} are no longer applicable (see \Cref{sec:conclusion_and_open_questions}).

\subsection{Our Contribution}

In this work, we extend the study of discrepancy by considering functions $f_i$ with as few assumptions as possible.
Specifically, we only assume that $f_i$ is \emph{$1$-Lipschitz}, i.e., changing the set $S$ by a single element can change the value of~$f_i$ by at most\footnote{Note that the value~$1$ is without loss of generality, since we can always rescale our function.} $1$. 
Under this assumption, we establish an $O(\sqrt{n \log n})$ bound on the discrepancy.

\begin{theorem}[Main Theorem (Informal)] \label{thm:main-informal}
Every $1$-Lipschitz family $\cF$ of $n$ functions has discrepancy $O(\sqrt{n \log n})$.
\end{theorem}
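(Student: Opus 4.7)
A uniformly random coloring $\chi$ satisfies $\mathbb{E}[f_i(\chi^{-1}(1)) - f_i(\chi^{-1}(2))] = 0$ by the symmetry $\chi \leftrightarrow 3 - \chi$, and the map $\chi \mapsto f_i(\chi^{-1}(1)) - f_i(\chi^{-1}(2))$ is $2$-Lipschitz in Hamming distance because $f_i$ is $1$-Lipschitz. McDiarmid's inequality plus a union bound over the $n$ functions thus immediately gives discrepancy $O(\sqrt{m \log n})$. The entire challenge is to remove the dependence on $m$. This is non-trivial because partial-coloring techniques underlying Spencer-type theorems (via entropy or convex geometry, as in Beck, Bansal, and Banaszczyk) rely on the linear structure of additive functions and are not available here; the introduction already signals this.

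My plan is a randomized incremental construction. I would process the items in a uniformly random order, maintaining the potential $\Phi := \sum_{i=1}^n \cosh(\lambda D_i)$ with $D_i := f_i(S_1) - f_i(S_2)$ and $\lambda = \Theta(\sqrt{(\log n)/n})$, and at each step assign the current item to side $1$ or side $2$ via a carefully chosen state-dependent randomized rule. Since each $D_i$ moves by at most $1$ per step, the second-order term in the Taylor expansion of $\cosh(\lambda D_i)$ contributes $O(\lambda^2 \Phi)$ per step, which is the only ``unavoidable'' cost; the first-order term can in principle be killed in expectation by the randomness of the side choice. If this is achieved, then $\log \Phi$ is a supermartingale with bounded increments, and a Freedman-type concentration inequality applied at the stopping time when $\max_i |D_i|$ first reaches the target converts the potential bound into the desired $O(\sqrt{n \log n})$ high-probability bound on $\max_i |D_i|$, independently of $m$.

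The main obstacle I anticipate is designing the side-assignment rule so as to cancel the first-order drift $p_1 \delta_i^{1} - (1 - p_1) \delta_i^{2}$ for all $i$ simultaneously, where $\delta_i^{j} := f_i(S_j \cup \{g\}) - f_i(S_j)$. In Spencer's additive setting one has $\delta_i^{1} = \delta_i^{2}$ and a symmetric coin flip suffices, but here the two marginals differ and depend on the current partial state, so no single $p_1$ cancels drift across all $n$ coordinates. My plan is to exploit the random ordering: averaging $\delta_i^{j}$ over a uniformly random not-yet-processed item gives a quantity that, by a telescoping identity using $1$-Lipschitzness, is tightly controlled by $f_i([m]) - f_i(S_1 \cup S_2)$ and related global quantities. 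I would set $p_1$ as a weighted combination of these averaged marginals (with weights $\sinh(\lambda D_i)$ matching the linearization of $\cosh$), so that the first-order drift vanishes after conditioning on the history. Verifying the requisite cancellation up to second-order slack, and handling the residual error from the $1$-Lipschitz approximation of the true (item-specific) marginals by their average, is the central technical step.
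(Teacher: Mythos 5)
Your plan correctly identifies the challenge (removing the $m$-dependence) and correctly observes that the partial-coloring toolbox is unavailable, but the proposed incremental potential argument has a concrete gap that makes it stop at $O(\sqrt{m\log n})$. Even if you succeed in choosing $p_1$ so that the first-order drift of $\Phi=\sum_i\cosh(\lambda D_i)$ is nonpositive in expectation, the second-order term you yourself flag gives
\[
\E\bigl[\Phi_{t+1}\mid\Phi_t\bigr]\le\bigl(1+O(\lambda^2)\bigr)\Phi_t,
\]
so $\log\Phi$ is \emph{not} a supermartingale: it is a supermartingale plus a deterministic drift of $O(\lambda^2)$ \emph{per step}, and there are $m$ steps, not $n$. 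After $m$ steps $\E[\Phi_m]\le n\,e^{O(m\lambda^2)}$, so the potential only yields $\max_i|D_i|=O\bigl((\log n + m\lambda^2)/\lambda\bigr)$, which optimizes (over $\lambda$) to $O(\sqrt{m\log n})$---precisely the trivial bound you get from a uniform random coloring plus McDiarmid. The value $\lambda=\Theta(\sqrt{(\log n)/n})$ would only help if the process had $O(n)$ steps. The random-ordering/averaged-marginal idea does not repair this: it targets the first-order drift, but each step still incurs a $\Theta(\lambda^2\Phi)$ second-order cost whenever the marginals are of unit order, and for a general $1$-Lipschitz function nothing forces $\sum_t(\delta_i^{j,g_t})^2$ to be $O(n)$ rather than $\Theta(m)$.

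The missing ingredient is a dimension-reduction step that leaves only $O(n)$ genuinely random coordinates. The paper obtains it not through an incremental argument but topologically: placing the $m$ items as length-$1/m$ subintervals of $[0,1]$ and applying the consensus $1/k$-division theorem of Simmons--Su and Filos-Ratsikas et al.\ (a Borsuk--Ulam-type result, \Cref{thm:consensus-div}) to the multilinear extensions $\of_i$, it produces a fractional $2$-coloring with at most $n$ fractional coordinates on which every $\of_i$ is exactly balanced (\Cref{lem:few-fractional}). Only these $\le n$ coordinates are then rounded independently, and McDiarmid with $q\le n$ gives the $O(\sqrt{n\log n})$ bound directly---no potential function or Freedman inequality needed. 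This plays the role of the LP extreme-point reduction from the additive case; without some analogue of it, the potential-function route cannot beat $O(\sqrt{m\log n})$.
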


Note that in this setting, \citet{Spencer85} showed a bound of $O(\sqrt{n})$ for additive functions. 
When $m = \Omega(n)$, a tight lower bound of $\Omega(\sqrt{n})$ is also known. Thus, our result is tight up to a factor of $O(\sqrt{\log n})$; closing this gap remains an interesting question.
In the additive setting, a simpler argument yields a weaker $O(\sqrt{n \log n})$ bound: one can use linear programming (LP) feasibility to reduce the universe size to $n$ and then apply a random coloring. A primary challenge beyond the additive setting is that this reduction step is no longer straightforward. We show that a similar step is still possible, but it requires invoking the Borsuk--Ulam theorem from topology (in the form of \Cref{thm:consensus-div}) instead of a simple LP feasibility argument.

\paragraph{Fair Division.} 
As an application of our result, we show a bound of $O(\sqrt{n \log n})$ for consensus halving of indivisible goods when agents have monotone (but not necessarily additive) utilities. 
A \emph{consensus halving} is a partition of the goods into two parts such that every agent values both parts equally.
Our relaxation allows each agent's utilities for the two parts to differ by up to $O(\sqrt{n \log n})$ goods; see \Cref{sec:fair_div} for the formal definition.

\begin{theorem} \label{thm:halving}
For any $n$ agents with monotone utilities on any finite number of goods, there always exists a consensus halving up to $O(\sqrt{n \log n})$ goods.  
\end{theorem}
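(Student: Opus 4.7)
The plan is to deduce \Cref{thm:halving} from the main discrepancy bound \Cref{thm:main-informal} by a normalization argument. The connection is almost immediate once we rescale each agent's utility so that it fits the $1$-Lipschitz framework. For each agent $i$ with monotone utility $u_i$, let
\[ L_i := \max_{S \subseteq \cM,\, g \in \cM} |u_i(S \cup \{g\}) - u_i(S \setminus \{g\})| \]
be the largest marginal contribution of any single good. Define $f_i := u_i / L_i$, which is $1$-Lipschitz by construction (the degenerate case $L_i = 0$ corresponds to a constant utility and is trivial). Then I would apply \Cref{thm:main-informal} to the $1$-Lipschitz family $\{f_1, \ldots, f_n\}$ on the ground set $\cM$, obtaining a partition $(S_1, S_2)$ with $|f_i(S_1) - f_i(S_2)| \leq O(\sqrt{n \log n})$ for every~$i$, and hence
\[ |u_i(S_1) - u_i(S_2)| \leq L_i \cdot O(\sqrt{n \log n}) \]
in the original units.

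Next I would translate the inequality above into the consensus halving up to $k$ goods notion of \Cref{sec:fair_div}. Since each single good can change $u_i$ by at most $L_i$ in any context, transferring a good from the larger of $\{S_1,S_2\}$ to the smaller one can shrink the utility gap by at most $2L_i$. Thus, starting from a gap bounded by $L_i \cdot O(\sqrt{n\log n})$, a carefully (e.g. greedily) chosen transfer of $O(\sqrt{n\log n})$ goods suffices to close it for agent~$i$, up to the slack allowed by the relaxation. Crucially, the $k$-goods relaxation lets each agent select their own small set of exceptional goods, so the same partition $(S_1,S_2)$ works simultaneously for all $n$ agents.

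The genuinely hard step in this proof is \Cref{thm:main-informal} itself; the fair-division consequence is essentially a rescaling plus a short monotonicity argument. Within the translation, the only point requiring a bit of care is ensuring that the greedy transfer indeed reaches balance rather than overshooting --- this is handled by the fact that the consensus halving up to $k$ goods definition tolerates discrepancies up to the marginal value of $k$ goods, which matches exactly the $L_i \cdot O(\sqrt{n\log n})$ gap produced by the reduction.
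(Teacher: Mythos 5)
Your reduction has a genuine gap in the translation step. You apply \Cref{thm:main-informal} to $f_i := u_i / L_i$ and obtain a partition with $|u_i(S_1) - u_i(S_2)| \le L_i \cdot O(\sqrt{n\log n})$; you then claim that a greedy transfer/removal of $O(\sqrt{n\log n})$ goods closes this utility gap because each good changes $u_i$ by at most $L_i$. That implication is false: $1$-Lipschitzness gives an \emph{upper} bound of $L_i$ on each marginal, but removals can decrease $u_i$ by far less than $L_i$ --- even by $0$ --- so a small utility gap does not translate into a small number of removals. Concretely, take $u_i(S) := \bone[S \cap T \neq \emptyset]$ for a fixed set $T$ of size $K$; this is monotone, $1$-Lipschitz, and $L_i = 1$. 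A partition with $A_1 \supseteq T$ and $A_2 \cap T = \emptyset$ has utility gap exactly $1 \le O(\sqrt{n\log n})$, so it satisfies the conclusion of \Cref{thm:main-informal} applied to $f_i = u_i$, yet to achieve $u_i(A_1 \setminus R_1) \le u_i(A_2) = 0$ you must remove \emph{all} $K$ goods in $T$, and $K$ is unbounded. Nothing in your argument forces the discrepancy theorem to return a partition where the set distances (rather than just the utility values) are balanced, so the ``consensus halving up to $c$ goods'' requirement in \Cref{def:consesus-halving} can fail for the partition you obtain.

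The paper sidesteps exactly this obstruction by never applying discrepancy to (a rescaling of) $u_i$. Instead it encodes the combinatorial criterion directly into the discrepancy instance: it defines a monotone family $\cM_i$ of ``acceptable'' bundles via a truncated utility $u'_i(S) := \min_{|T|\le c/2}u_i(S\setminus T)$ and sets $f_i(S) := d(S, \cM_i)$, the Hamming distance to $\cM_i$. This $f_i$ is automatically $1$-Lipschitz, and --- crucially --- its value literally counts how many goods must be altered to reach an acceptable set, so a small discrepancy for $f_i$ yields a small $|R_1|$ directly, which is what \Cref{def:consesus-halving} asks for. Your normalization idea applied to $u_i$ itself cannot recover this, because it conflates ``small in utility units'' with ``small in Hamming distance,'' and these are genuinely different for non-additive monotone functions.
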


To the best of our knowledge, only an $O(n)$ bound was previously known for this general setting \citep{SimmonsSu03}.\footnote{Strictly speaking, \citet{SimmonsSu03} only considered the case of divisible goods. 
However, it is not difficult to extend their result to the case of indivisible goods by rounding---see, e.g., the proof of \Cref{lem:few-fractional}.} 
On the other hand, an $O(\sqrt{n})$ bound has been proven for the additive case \citep{ManurangsiSu22}. 
We also remark that consensus halving is stronger than the fundamental fairness notion of \emph{envy-freeness}, which requires that if the agents are partitioned into two groups, then each agent finds her group's bundle of goods to be at least as valuable as the other group's bundle.
Thus, as a corollary, we also obtain the existence of an allocation that is envy-free up to $O(\sqrt{n \log n})$ goods (\Cref{cor:ef}).

Our main discrepancy theorem (\Cref{thm:main-informal}) can be readily extended to the $k$-color version for any prime $k$---see \Cref{thm:main}. 
However, \Cref{thm:halving} does \emph{not} extend naturally to $k > 2$; we discuss the challenges associated with this question in \Cref{sec:conclusion_and_open_questions}.

\subsection{Concurrent Work}

In an independent and concurrent work,~\cite{DF25} study discrepancy in a non-additive setting similar to ours. Specifically, they prove \Cref{thm:main-informal} and its generalization to $k > 2$ colors. For $k > 2$, their results are stronger than ours (\Cref{thm:main}) in two aspects: their $k$ can be an arbitrary prime power---compared to a prime in our case---and their discrepancy bound is $O(\sqrt{n \log(nk)})$---compared to our bound of $O(\sqrt{nk \log(nk)})$. This gap of $O(\sqrt{k})$ between the two bounds can be significant for applications where $k$ is large; one such example discussed by \cite{DF25} is ``envy-freeness with subsidies'' for which one needs $n = k$. The advantage comes from their use of a necklace splitting theorem of \cite{JojicPZ21}, which gives a refined bound on the number of intervals of each bundle.
On the other hand, we use a consensus $1/k$-division result of \cite{FilosRatsikasHoSo21}, which only bounds the total number of intervals (see \Cref{thm:consensus-div}).

\cite{DF25} also prove the existence of a relaxation of consensus halving for indivisible goods. Although the result is similar to our \Cref{thm:halving}, their relaxation of consensus halving differs in that it allows \emph{transferring} $c$ items from one bundle to another, whereas our version, which is commonly used in fair division, only allows \emph{removing} $c$ items from one bundle (see \Cref{def:consesus-halving}). Since the latter notion is more stringent, our result implies theirs.

\section{Preliminaries}

For notational convenience, we will write $S \subseteq [m]$ and its indicator vector (i.e., $\bone_S \in \{0, 1\}^m$ such that $(\bone_S)_g = 1$ if and only if $g \in S$) interchangeably throughout this work.
Furthermore, we write $\oS$ to denote the complement of $S$ (i.e., $\oS := [m] \setminus S$).

We let $\Delta^{(k - 1)}$ denote the $(k - 1)$-simplex, i.e., $\{\bx \in [0, 1]^k \mid \sum_{j \in [k]} x_j = 1\}$. 
A \emph{$k$-coloring} of $[m]$ is a function $\chi: [m] \to [k]$; a \emph{fractional $k$-coloring} of $[m]$ is a function $\ochi: [m] \to \Delta^{(k - 1)}$. 
We say that $g \in [m]$ is \emph{integral} (with respect to $\ochi$) if $\ochi(g)$ is one of the vertices of the simplex; otherwise, we say that it is \emph{fractional}. 
For convenience, for every $j \in [k]$, we also define $\ochi_j \in [0, 1]^m$ by $(\ochi_j)_g := \ochi(g)_j$ for each $g \in [m]$.

For any $\obx \in [0, 1]^m$, we let $\cD_{\obx}$ denote the distribution over subsets of $[m]$ such that each $g \in [m]$ is included independently with probability $\ox_g$.
For a function $f: \{0, 1\}^m \to \R$, its \emph{multilinear extension} $\of: [0, 1]^m \to \R$ is defined by $\of(\obx) := \E_{\bx \sim \cD_{\obx}}[f(\bx)]$ for any $\obx \in [0, 1]^m$.

For any sets $S, T \subseteq [m]$, we use $d(S, T)$ to denote the Hamming distance between $S$ and~$T$. For a non-empty collection $\cM$ of subsets of $[m]$, we let $d(S, \cM) := \min_{T \in \cM} d(S, T)$.

We say that a function $f: \{0, 1\}^m \to \R$ is \emph{$1$-Lipschitz} if $|f(S) - f(S')| \leq 1$ for any sets $S, S'$ such that $d(S, S') \leq 1$. 
Moreover, we say that $f$ is \emph{monotone} if $f(S) \leq f(S')$ for any $S \subseteq S'$. 
A family $\cF$ of functions is said to be $1$-Lipschitz (resp., monotone) if all of its functions are $1$-Lipschitz (resp., monotone).

\subsection{Fair Division: Consensus $\boldsymbol{1/k}$-Division for Divisible Goods}
\label{subsec:prelim_consensus}

In the fair allocation of \emph{divisible goods}, also referred to as ``cake cutting'', a \emph{consensus $1/k$-division} is a partition of the cake into $k$ pieces such that each of the $n$ agents values each piece of the cake exactly the same as any other piece. The cake is simply an interval and the goal is to obtain a consensus $1/k$-division using only a few cuts.

When the utility functions of the agents are additive, it is known that a consensus $1/k$-division using at most $n(k-1)$ cuts exists for any~$k$ \citep{Alon87}. For utility functions that are not additive, this result is only known for prime $k$. The case $k = 2$ was shown by \citet{SimmonsSu03}, and the extension to prime numbers $k > 2$ by \citet{FilosRatsikasHoSo21}.\footnote{Specifically, this is stated as Theorem~6.5 in the extended version of Filos-Ratsikas et al.'s work.}

Let $\cB([0, 1])$ denote the Borel $\sigma$-algebra on the unit interval, let $\lambda$ denote the Lebesgue measure on the unit interval, and let $\Delta$ denote the symmetric difference.

\begin{assumption} \label{as:smooth-util}
Let $u: \cB([0, 1]) \to \R$. 
We assume that $u$ satisfies the following: For any $\eps > 0$, there exists $\delta > 0$ such that, for all $A, B \in \cB([0, 1])$ that satisfy $\lambda(A \Delta B) \leq \delta$, we have $|u(A) - u(B)| \leq \eps$.
\end{assumption}

\begin{theorem}[{\citep{SimmonsSu03,FilosRatsikasHoSo21}}] \label{thm:consensus-div}
For any prime number~$k$ and any utility functions $u_1, \dots, u_n: \cB([0, 1]) \to \R$ that satisfy Assumption~\ref{as:smooth-util}, it is possible to partition the unit interval into $k$ pieces $A_1, \dots, A_k$ using at most $n(k - 1)$ cuts so that $u_i(A_j) = u_i(A_\ell)$ for all $i \in [n]$ and $j, \ell \in [k]$.
\end{theorem}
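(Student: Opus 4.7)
The plan is to reduce to a topological fixed-point theorem: the Borsuk--Ulam theorem when $k=2$, and its $\mathbb{Z}_k$-equivariant generalization (Volovikov's theorem, extending Borsuk--Ulam to free $\mathbb{Z}_p$-actions) when $k$ is an odd prime. The first step is to parameterize the space of candidate partitions. Fix $N = n(k-1) + 1$ and consider partitions of $[0,1]$ obtained by choosing $N$ non-negative interval lengths summing to $1$ together with a label in $[k]$ for each interval. This space is naturally homeomorphic to the $N$-fold join $(\mathbb{Z}_k)^{*N}$, a free $\mathbb{Z}_k$-CW complex whose topological connectivity is $N-2 = n(k-1) - 1$. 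The cyclic action of $\mathbb{Z}_k$ on labels induces the bundle relabeling $A_j \mapsto A_{j+1 \bmod k}$.

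Next, I would define a discrepancy map. For each partition $(A_1,\dots,A_k)$ in the parameter space, set
\begin{equation*}
F: (\mathbb{Z}_k)^{*N} \to V, \qquad F(\cdot)_{i,j} := u_i(A_j) - \tfrac{1}{k}\sum_{\ell \in [k]} u_i(A_\ell),
\end{equation*}
where $V := \bigl\{v \in \R^{n \times k} : \sum_{j} v_{i,j} = 0 \text{ for all } i \bigr\}$ is a real vector space of dimension $n(k-1)$. The group $\mathbb{Z}_k$ acts on $V$ by cyclically permuting the $j$-index, and by construction $F$ is $\mathbb{Z}_k$-equivariant. Continuity of $F$ follows from \Cref{as:smooth-util}: nudging a cut slightly changes each $A_j$ only by a set of small Lebesgue measure, so each $u_i(A_j)$ varies continuously.

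The final step is to apply the topological theorem. When $k$ is prime, the representation $V$ contains no nonzero $\mathbb{Z}_k$-fixed vector, so by Volovikov's theorem any $\mathbb{Z}_k$-equivariant continuous map from $(\mathbb{Z}_k)^{*N}$---whose connectivity $n(k-1)-1$ equals $\dim V - 1$---into $V$ must hit zero. A zero of $F$ is precisely a partition with $u_i(A_j) = u_i(A_\ell)$ for every $i$ and all $j,\ell$, and it involves at most $N-1 = n(k-1)$ cuts, as required.

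The main obstacle is that primality of $k$ is essential: for composite $k$, nontrivial subgroups of $\mathbb{Z}_k$ produce fixed vectors in $V$ (one per divisor), and the standard equivariant obstruction argument collapses---this is why the theorem is only stated for prime $k$. A secondary technical point is checking continuity of $F$ on the boundary strata of the join where some intervals are empty, which once again rests on \Cref{as:smooth-util}, since collapsing an interval produces a symmetric-difference perturbation whose measure tends to $0$ with the interval's length.
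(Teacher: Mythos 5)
The paper does not prove \Cref{thm:consensus-div}; it is imported as a black box from the works of Simmons--Su (for $k=2$) and Filos-Ratsikas, Hollender, and Sotiraki (for prime $k>2$), with only the remark that the proof ``relies on (a generalization of) the Borsuk--Ulam theorem.'' There is therefore no in-paper proof to compare against. Your sketch is, however, a plausible reconstruction of the argument in the cited sources, and its skeleton is correct: the $N$-fold join $(\mathbb{Z}_k)^{*N}$ with $N = n(k-1)+1$ parameterizes labeled interval partitions with $n(k-1)$ cuts; it is $(N-2)$-connected with a free diagonal $\mathbb{Z}_k$-action; the discrepancy map $F$ lands in a $\mathbb{Z}_k$-representation $V$ of dimension $n(k-1)$ with $V^{\mathbb{Z}_k}=\{0\}$; and an equivariant Borsuk--Ulam theorem for free $\mathbb{Z}_p$-actions (more commonly cited as Dold's theorem, though Volovikov's theorem also suffices since it subsumes the $G=\mathbb{Z}_p$ case) forces a zero of $F$, which is the desired consensus $1/k$-division. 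Two details worth making explicit in a full writeup: (i) well-definedness of $F$ on the join, not just on the open top stratum---the identifications where some $t_\nu=0$ are respected because \Cref{as:smooth-util} forces $u_i$ to be invariant under null-set modifications of its argument; and (ii) freeness of the $\mathbb{Z}_k$-action on the unit sphere $S(V)$ of the target, which is precisely where primality enters: any nonzero $v \in V$ fixed by a nontrivial element of $\mathbb{Z}_p$ is fixed by the whole group it generates, namely all of $\mathbb{Z}_p$, contradicting $V^{\mathbb{Z}_p}=\{0\}$. You flag the second point correctly as the obstruction for composite $k$.
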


We note that this theorem does \emph{not} require the utility functions to be additive or non-negative, and its proof relies on (a generalization of) the Borsuk--Ulam theorem.

\subsection{Concentration Inequality}

Finally, we state the following well-known inequality of McDiarmid, which is sometimes referred to as the ``bounded difference method''.

\begin{theorem}[\citep{McDiarmid89}] \label{thm:bounded-diff}
Let $q\in\N$, and let $h: \{0, 1\}^q \to \R$ be any $1$-Lipschitz function. 
For any independent random variables $X_1, \dots, X_q$, let $Y := h(X_1, \dots, X_q)$. 
Then, for any $t > 0$, we have
\begin{align*}
\Pr[|Y - \E[Y|| \geq t] \leq 2\exp\left(-\frac{2t^2}{ q}\right).
\end{align*}
\end{theorem}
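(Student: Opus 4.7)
The plan is to prove the bound via the \emph{method of bounded differences}: construct the Doob martingale associated with $Y$ and apply a Chernoff-style moment generating function argument (essentially Azuma--Hoeffding, refined by Hoeffding's lemma to obtain the sharp constant).

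First, I would set $Z_0 := \E[Y]$ and $Z_i := \E[Y \mid X_1, \dots, X_i]$ for each $i \in [q]$, so that $Z_q = Y$ and $(Z_i)_{i=0}^{q}$ is a martingale adapted to the filtration generated by $X_1, \dots, X_q$. Writing $D_i := Z_i - Z_{i-1}$, we have $Y - \E[Y] = \sum_{i=1}^{q} D_i$. Define
\[
g_i(x) := \E[h(X_1, \dots, X_{i-1}, x, X_{i+1}, \dots, X_q) \mid X_1, \dots, X_{i-1}];
\]
by independence of the $X_j$'s together with the $1$-Lipschitz hypothesis on $h$, we have $|g_i(x) - g_i(x')| \leq 1$ for any admissible $x, x'$. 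Hence, conditional on $X_1, \dots, X_{i-1}$, the martingale difference $D_i = g_i(X_i) - \E[g_i(X_i) \mid X_1, \dots, X_{i-1}]$ has mean zero and is supported on an interval of length at most~$1$.

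The core tool is then Hoeffding's lemma: a mean-zero random variable supported on an interval of length $c$ has moment generating function bounded by $\exp(\lambda^2 c^2 / 8)$. Applying it conditionally to each $D_i$ and telescoping through the tower property yields $\E[e^{\lambda(Y - \E[Y])}] \leq \exp(\lambda^2 q / 8)$ for every $\lambda \in \R$. The standard Chernoff step $\Pr[Y - \E[Y] \geq t] \leq e^{-\lambda t} \E[e^{\lambda(Y - \E[Y])}]$, optimized at $\lambda = 4t/q$, produces the one-sided bound $\exp(-2t^2/q)$; repeating the argument with $-h$ in place of $h$ and combining via a union bound gives the two-sided inequality in the statement.

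The main subtlety---rather than a genuine obstacle---is recovering the sharp constant $2$ in the exponent. A direct Azuma application that merely uses $|D_i| \leq 1$ would yield only the weaker $\exp(-t^2/(2q))$. The improvement hinges on noting that the conditional \emph{range} of $D_i$ (not merely its absolute value) is bounded by~$1$, so Hoeffding's lemma contributes $c^2/8 = 1/8$ per coordinate rather than the cruder $c^2/2 = 1/2$ that a worst-case martingale-difference bound would give.
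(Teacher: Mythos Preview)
Your proof is correct and is the standard argument for McDiarmid's inequality. Note, however, that the paper does not actually prove this statement: it is stated as a preliminary result with a citation to \citet{McDiarmid89} and used as a black box. So there is no ``paper's own proof'' to compare against---your write-up simply supplies the classical proof that the paper omits.
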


\section{Discrepancy Beyond Additive Functions}
\label{sec:discrepancy_of_monotone_and_lipschitz_functions}

The definition of discrepancy for $k\ge 2$ colors can be defined as follows. Again, we note that this generalizes the standard definition for hypergraphs \citep{DoerrSr03}.

\begin{defn}
Let $\cF = \{f_1, \dots, f_n\}$ be a family of functions $f_1, \dots, f_n: \{0, 1\}^{m} \to \R$. For a $k$-coloring $\chi: [m] \to [k]$, its \emph{discrepancy (with respect to $\cF$)} is defined as
\begin{align*}
\disc(\cF, \chi) := \max_{i \in [n]} \max_{j, \ell \in [k]} |f_i(\chi^{-1}(j)) - f_i(\chi^{-1}(\ell))|.
\end{align*}
Then, the \emph{$k$-color discrepancy} of $\cF$ is
\begin{align*}
\disc(\cF, k) := \min_{\chi: [m] \to [k]} \disc(\cF, \chi).
\end{align*}
\end{defn}

We state our main bound below.
Note that when $k$ is constant (e.g., $k = 2$ as in \Cref{thm:main-informal}), this bound is $O(\sqrt{n\log n})$.

\begin{theorem}[Main Theorem] \label{thm:main}
For any prime number~$k$ and any $1$-Lipschitz family $\cF$ of $n$ functions, it holds that $$\disc(\cF, k) < \sqrt{2n(k - 1) \cdot \left(1 + \ln (n k)\right)}.$$
\end{theorem}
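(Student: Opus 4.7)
The plan is to emulate the reduction-then-rounding strategy sketched in the introduction for the additive case, replacing the LP-feasibility reduction with an invocation of the consensus $1/k$-division theorem (\Cref{thm:consensus-div}), and then finishing with McDiarmid's inequality.

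\textbf{Step 1 (From $[m]$ to a fractional coloring with few fractional coordinates).} I would embed the ground set into the unit interval by partitioning $[0,1]$ into $m$ equal sub-intervals $I_1,\dots,I_m$, and define, for each $i\in[n]$, a utility
\[
u_i(A) := \of_i(\bx_A), \qquad (\bx_A)_g := m\cdot\lambda(A\cap I_g).
\]
Because $f_i$ is $1$-Lipschitz, a coupling argument on the multilinear extension gives $\|\nabla\of_i\|_\infty\le 1$, so $|u_i(A)-u_i(B)|\le m\cdot\lambda(A\Delta B)$; this verifies Assumption~\ref{as:smooth-util}. Applying \Cref{thm:consensus-div} yields a partition $A_1,\dots,A_k$ of $[0,1]$ made from at most $n(k-1)$ cuts such that $u_i(A_j)$ is independent of $j$ for every $i$. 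Reading off $\ochi(g)_j := m\cdot\lambda(A_j\cap I_g)$ gives a fractional $k$-coloring of $[m]$; only the sub-intervals $I_g$ containing at least one cut can be fractional, so at most $q := n(k-1)$ elements are fractional. By construction $\of_i(\ochi_j)=u_i(A_j)$, so there is a common value $\mu_i$ with $\of_i(\ochi_j)=\mu_i$ for every $j$.

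\textbf{Step 2 (Rounding and concentration).} I round each $g\in[m]$ independently by drawing $\chi(g)\sim\ochi(g)$ and set $S_j := \chi^{-1}(j)$. For fixed $j$ the indicator $\bone_{S_j}$ is exactly a sample from $\cD_{\ochi_j}$, so $\E[f_i(S_j)] = \of_i(\ochi_j) = \mu_i$. Only the $q$ fractional coordinates contribute randomness, and changing a single $\chi(g)$ modifies each $S_j$ by at most one element; since $f_i$ is $1$-Lipschitz, each $f_i(S_j)$ is a $1$-Lipschitz function of $q$ independent random variables. \Cref{thm:bounded-diff} therefore yields
\[
\Pr\!\bigl[\,|f_i(S_j)-\mu_i|\ge t'\,\bigr]\;\le\;2\exp\!\left(-\tfrac{2t'^2}{q}\right).
\]
Choosing $t'^2 := (q/2)(1+\ln(nk))$ makes the right-hand side equal $2/(e n k)$; a union bound over the $nk$ pairs $(i,j)$ gives a total failure probability of at most $2/e<1$. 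By the probabilistic method there exists a coloring with $|f_i(S_j)-\mu_i|<t'$ for every $(i,j)$, and then the triangle inequality delivers $|f_i(S_j)-f_i(S_\ell)|<2t'=\sqrt{2n(k-1)(1+\ln(nk))}$, which is the claimed bound.

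\textbf{Main obstacle.} The substantive step is Step~1: producing a fractional coloring on which all multilinear extensions are \emph{already} balanced across the $k$ colors, while using only $O(n)$ fractional coordinates. In the additive case this follows from a simple rank/LP argument on the linear system $\sum_g a_{ig}\ochi_j(g) = \sum_g a_{ig}/k$, but without additivity no such linear reduction is available; the key idea is to reformulate the task as a cake-cutting instance with measurable utilities defined via the multilinear extension and invoke the Borsuk--Ulam-based \Cref{thm:consensus-div}, whose cut bound is precisely what caps the number of fractional elements at $n(k-1)$. Step~2 is then a routine application of bounded differences, with constants tuned to recover the sharp $1+\ln(nk)$ inside the square root.
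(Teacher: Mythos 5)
Your proposal is correct and follows essentially the same approach as the paper: define measurable utilities on the unit interval via the multilinear extensions, invoke \Cref{thm:consensus-div} to obtain a fractional coloring with at most $n(k-1)$ fractional elements on which all $\of_i$ are balanced, then round independently and apply McDiarmid's inequality (\Cref{thm:bounded-diff}) plus a union bound. The constant-tracking ($t' = \sqrt{n(k-1)(1+\ln(nk))/2}$ and the $2/e < 1$ union bound) matches the paper's computation, and the Lipschitz verification of \Cref{as:smooth-util} via a coupling on the multilinear extension is precisely the argument in \Cref{lem:multi-lipsc}.
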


When the functions $f_i$ are additive, a lower bound of $\Omega(\sqrt{n})$ has been shown by \citet{MM25}. 
Thus, our bound is tight up to a factor of $O(\sqrt{k \log(nk)})$.
We also remark that a random coloring (together with a concentration argument) gives an upper bound of $O_k(\sqrt{m \log n})$. Therefore, our contribution can be viewed as removing the dependency on $m$, which is crucial for applications in fair division (see \Cref{sec:fair_div}) where $m$ is often large.

Before we proceed to the formal proof, let us give some high-level intuition. 
The bound in \Cref{thm:main} can be easily shown in the additive setting as follows. 
First, we write a linear program with the constraints $f_i(\chi^{-1}(j)) = f_i(\chi^{-1}(\ell))$. 
Since there are only $nk$ such (non-redundant) constraints, we can solve for an extreme point solution that has at most $nk$ non-integral variables.
This corresponds to a fractional coloring with at most $nk$ fractional variables.
We round each of these fractional variables independently; a concentration argument then yields the desired bound.

Since we are dealing with non-additive functions, we cannot use linear programs. Nevertheless, we can derive the following analogous lemma, which is a corollary of \Cref{thm:consensus-div}. 
The idea of the proof is to place the $m$ elements on the unit interval, with each element corresponding to an interval of size~$1/m$.
We then apply \Cref{thm:consensus-div} to find the desired cuts. 

\begin{lemma} \label{lem:few-fractional}
For any prime number $k$ and any $1$-Lipschitz functions $f_1, \dots, f_n: \{0, 1\}^m \to \R$, there exists a fractional $k$-coloring $\ochi: [m] \to \Delta^{(k-1)}$ with at most $n(k - 1)$ fractional elements such that $\of_i(\ochi_j) = \of_i(\ochi_\ell)$ for all $i \in [n]$ and $j, \ell \in [k]$.
\end{lemma}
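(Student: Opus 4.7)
The plan is to reduce to the divisible cake-cutting setting of \Cref{thm:consensus-div}. I embed $[m]$ into the unit interval by identifying each element $g \in [m]$ with the subinterval $I_g := [(g-1)/m, g/m]$. For a measurable set $A \subseteq [0,1]$, define $\obx^A \in [0,1]^m$ by $\ox^A_g := m \cdot \lambda(A \cap I_g)$, i.e., the fraction of $I_g$ covered by $A$. Then set $u_i(A) := \of_i(\obx^A)$. The goal is to invoke \Cref{thm:consensus-div} on $u_1, \dots, u_n$ and translate the resulting partition of $[0,1]$ back into a fractional coloring of $[m]$.

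The first task is to verify that each $u_i$ satisfies \Cref{as:smooth-util}. Here I would use the fact that, by a standard coupling of the product distributions $\cD_{\obx}$ and $\cD_{\oby}$, the $1$-Lipschitz property of $f_i$ yields $|\of_i(\obx) - \of_i(\oby)| \le \|\obx - \oby\|_1$. Combined with the observation that $\|\obx^A - \obx^B\|_1 \le m \cdot \lambda(A \Delta B)$, this gives $|u_i(A) - u_i(B)| \le m \cdot \lambda(A \Delta B)$, and taking $\delta := \eps/m$ establishes \Cref{as:smooth-util}.

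Applying \Cref{thm:consensus-div} then produces a partition $A_1, \dots, A_k$ of $[0,1]$ using at most $n(k-1)$ cuts, with $u_i(A_j) = u_i(A_\ell)$ for all $i \in [n]$ and $j, \ell \in [k]$. I define the fractional $k$-coloring $\ochi$ by $\ochi(g)_j := m \cdot \lambda(A_j \cap I_g)$. Because $\sum_{j \in [k]} \lambda(A_j \cap I_g) = \lambda(I_g) = 1/m$, each $\ochi(g)$ lies in $\Delta^{(k-1)}$. Moreover, $\ochi_j = \obx^{A_j}$ by construction, so $\of_i(\ochi_j) = u_i(A_j)$, and the required equality $\of_i(\ochi_j) = \of_i(\ochi_\ell)$ follows immediately.

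It remains to count fractional elements. An element $g$ is fractional exactly when $\ochi(g)$ is not a vertex of $\Delta^{(k-1)}$, which requires that at least two of the pieces $A_j$ meet $I_g$ with positive measure; this can only happen if at least one of the cuts lies in the interior of $I_g$. Since there are at most $n(k-1)$ cuts and each cut lies in the interior of at most one $I_g$, at most $n(k-1)$ elements are fractional. The real content of the argument is the embedding together with the verification of \Cref{as:smooth-util}; the rest is bookkeeping. I do not anticipate a major obstacle, as the embedding is quite natural and the Lipschitz-to-smoothness translation is routine once the coupling bound on the multilinear extension is written down.
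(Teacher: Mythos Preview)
Your proposal is correct and matches the paper's proof essentially step for step: the same embedding of $[m]$ into $[0,1]$ via intervals $I_g$, the same definition $u_i(A) := \of_i(\obx^A)$, the same verification of \Cref{as:smooth-util} via the $\ell_1$-Lipschitzness of the multilinear extension with $\delta = \eps/m$, and the same counting of fractional elements via cuts. If anything, you spell out a few details (the simplex check, the coupling bound) that the paper defers to its appendix.
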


\begin{proof}
For each $i \in [n]$, we define the utility function $u_i: \cB([0, 1]) \to \R$ as follows. 
For any $A \in \cB([0, 1])$, let $\obx^A \in [0, 1]^m$ be defined by $$\ox^A_g = m \cdot \lambda\left(A \cap \left[\frac{g-1}{m}, \frac{g}{m}\right]\right)$$ for each $g \in [m]$. 
We then let $u_i(A) := \of_i(\obx^A)$ for any $A \in \cB([0, 1])$. 
Since $f_i$ is $1$-Lipschitz, $u_i$ satisfies\footnote{See \Cref{app:lipschitz} for a more detailed explanation.} Assumption~\ref{as:smooth-util} with $\delta = \eps/m$. 
Thus, we can apply \Cref{thm:consensus-div} to obtain a partition $A_1, \dots, A_k$ of $[0, 1]$ resulting from at most $n(k - 1)$ cuts such that $u_i(A_j) = u_i(A_\ell)$ for all $i \in [n]$ and $j, \ell \in [k]$. 
We then define the fractional $k$-coloring~$\ochi$ by $\ochi(g)_j := \ox^{A_j}_g$. 
By definition of $u_i$, this immediately implies that $\of_i(\ochi_j) = \of_i(\ochi_\ell)$ for all $i \in [n]$ and $j, \ell \in [k]$. 
Finally, observe that an element~$g$ can be fractional only if at least one of the cuts belongs to $\left(\frac{g-1}{m}, \frac{g}{m}\right)$. 
Hence, there are at most $n(k - 1)$ fractional elements, as desired.
\end{proof}

We can now prove \Cref{thm:main} by using \Cref{lem:few-fractional} together with (independent) randomized rounding and a concentration argument.

\begin{proof}[Proof of \Cref{thm:main}]
Let $\cF = \{f_1,\dots,f_n\}$.
Applying \Cref{lem:few-fractional} yields a fractional $k$-coloring $\ochi: [m] \to \Delta^{(k-1)}$ with at most $n(k - 1)$ fractional elements such that $\of_i(\ochi_j) = \of_i(\ochi_\ell)$ for all $i \in [n]$ and $j, \ell \in [k]$. 
Let $S$ denote the set of fractional elements with respect to~$\ochi$. 
Consider the following (randomized) $k$-coloring $\chi: [m] \to [k]$: independently for every $g \in [m]$, let $\chi(g) = j$ with probability $\ochi(g)_j$ for each $j \in [k]$. 
Let $t := \sqrt{\frac{n(k - 1) \cdot \left(1 + \ln (n k)\right)}{2}}$. 
We will show that
\begin{align} \label{eq:concen-applied}
\Pr\left[\left|f_i(\chi^{-1}(j)) - \of_i(\ochi_j)\right| \geq t\right] < \frac{1}{nk}
\end{align}
for all $i \in [n]$ and $j \in [k]$.

Before we prove Inequality~\eqref{eq:concen-applied}, let us first show how it implies the claimed bound on $\disc(\cF, k)$. 
By \eqref{eq:concen-applied} and the union bound, we have that with positive probability, 
\begin{align*}
\left|f_i(\chi^{-1}(j)) - \of_i(\ochi_j)\right| < t
\end{align*}
for all $i \in [n]$ and $j \in [k]$ simultaneously. 
Recall that $\of_i(\ochi_j) = \of_i(\ochi_\ell)$ for all $i \in [n]$ and $j, \ell \in [k]$.
Thus, when the event above occurs, we have that $|f_i(\chi^{-1}(j)) - f_i(\chi^{-1}(\ell))| < 2t$ for all $i \in [n]$ and $j, \ell \in [k]$. 
This implies that $\disc(\cF, k) < 2t$, as desired.

We now prove Inequality~\eqref{eq:concen-applied}. 
Consider fixed $i \in [n]$ and $j \in [k]$. 
For each $g \in S$, let $X_g$ be an indicator random variable that indicates whether $\chi(g) = j$ (note that, for all $g \notin S$, $\chi(g)$ is already fixed). 
We can thus view $f_i(\chi^{-1}(j))$ as a function $h((X_g)_{g \in S})$. 
Since $f_i$ is $1$-Lipschitz, so is $h$. 
Furthermore, the random variables $X_g$ are independent. 
Let $Y := h((X_g)_{g \in S})$. 
Applying \Cref{thm:bounded-diff} with $q := |S| \leq n(k - 1)$, we get
\begin{align*}
\Pr\left[|f_i(\chi^{-1}(j)) - \of_i(\ochi_j)| \geq t\right] 
= \Pr\left[|Y - \E[Y]| \geq t\right]
\leq 2\exp\left(-\frac{2t^2}{q}\right)
< \frac{1}{nk},
\end{align*}
where the last inequality follows from our choice of $t$. This concludes our proof.
\end{proof}

\section{From Discrepancy to Fair Division}
\label{sec:fair_div}

In this section, we derive the implications of our discrepancy result for fair division.
Consider a setting with $n$~agents, where each agent~$i$ has a monotone utility function $u_i$ on a set of goods $[m]$.
We define an approximate version of consensus halving, which has been studied for additive utility functions \citep{ManurangsiSu22,CaragiannisLaSh25,MM25}.

\begin{defn} \label{def:consesus-halving}
For monotone utility functions $u_1, \dots, u_n$, a set of goods $[m]$, and $c \in \N$, we say that a partition $\{A_1, A_2\}$ of the $m$~goods is a \emph{consensus halving up to $c$ goods} if, for each $i \in [n]$, there exist $R_1 \subseteq A_1$ and $R_2 \subseteq A_2$ with $|R_1|,|R_2|\le c$ such that $u_i(A_1) \geq u_i(A_2 \setminus R_2)$ and $u_i(A_2) \geq u_i(A_1 \setminus R_1)$.
\end{defn}

We are now ready to prove that a consensus halving up to $O(\sqrt{n \log n})$ goods always exists for agents with monotone utilities (\Cref{thm:halving}). 
Our strategy here is, for every agent $i$, to define a ``valid set'' $\cM_i \subseteq \{0, 1\}^m$ such that, if both $A$ and $\oA$ are ``close'' to $\cM_i$ in terms of Hamming distance (for all $i \in [n]$), then $\{A, \oA\}$ forms a consensus halving up to $O(\sqrt{n \log n})$ goods. 
Once we have identified such a valid set $\cM_i$, we simply let $f_i(A)$ be the distance from $A$ to $\cM_i$. 
This function is 1-Lipschitz and allows us to invoke our result from the previous section to conclude the proof. This idea is formalized below.

\begin{proof}[Proof of \Cref{thm:halving}]
Let $c := 2\sqrt{2n(k - 1) \cdot \left(1 + \ln (n k)\right)}$ be twice the bound in \Cref{thm:main}, where $k = 2$. 
For each $i \in [n]$, let us first define a modified utility function $u'_i: \{0, 1\}^m \to \R$ by
\begin{align*}
u'_i(S) := \min_{T \subseteq S,\, |T| \leq c/2} u_i(S \setminus T)
\end{align*}
for each $S\subseteq [m]$.
Then, we define $\cM_i \subseteq \{0, 1\}^m$ by
\begin{align*}
\cM_i := \left\{S \in \{0, 1\}^m \,\middle|\, u'_i(S) \geq u'_i(\oS)\right\},
\end{align*}
and $f_i: \{0, 1\}^m \to \R$ by
\begin{align*}
f_i(S) := d(S, \cM_i).
\end{align*}
Note that the distance is well-defined since $\cM_i$ is not empty; in fact, for each set~$S$, at least one of $S$ and $\oS$ belongs to $\cM_i$. 
Furthermore, $\cM_i$ is monotone, i.e., if $S$ belongs to~$\cM_i$, then so does any $S' \supseteq S$.

Observe that $f_i$ is $1$-Lipschitz. 
Thus, we may apply \Cref{thm:main} with $k = 2$ to find a coloring $\chi: [m] \to [2]$ such that the following holds for all $i \in [n]$:
\begin{align} \label{eq:diff-bound}
\left|f_i(\chi^{-1}(1)) - f_i(\chi^{-1}(2))\right| < \frac{c}{2}.
\end{align}
We claim that the allocation $\{A_1, A_2\} := \{\chi^{-1}(1), \chi^{-1}(2)\}$ is a consensus halving up to $c$ goods. To see this, consider any $i \in [n]$. 
Suppose without loss of generality that $u'_i(A_1) \geq u'_i(A_2)$. 
By definition of $u'_i$, there exist $T_1, T_2$ with $|T_1|,|T_2| \le c/2$ such that $u'_i(A_1) = u_i(A_1 \setminus T_1)$ and $u'_i(A_2) = u_i(A_2 \setminus T_2)$. 
On the one hand, we immediately have
\begin{align*}
u_i(A_1) \geq u_i(A_1 \setminus T_1) = u'_i(A_1) \geq u'_i(A_2) = u_i(A_2 \setminus T_2),
\end{align*}
where the first inequality follows from the monotonicity of $u_i$.

On the other hand, $u'_i(A_1) \geq u'_i(A_2)$ implies that $A_1 \in \cM_i$, and thus $f_i(A_1) = 0$. 
Inequality~\eqref{eq:diff-bound} then implies that $f_i(A_2) < c/2$, that is, $d(A_2, \cM_i) < c/2$. 
Since $\cM_i$ is monotone, this means that there exists $T'_2 \subseteq \overline{A_2} = A_1$ of size less than $c/2$ such that $A_2 \cup T'_2 \in \cM_i$.
We thus have
\begin{align*}
u_i(A_2) \geq u'_i(A_2 \cup T'_2) \geq u'_i(\overline{A_2 \cup T'_2}) = u'_i(A_1 \setminus T'_2),
\end{align*}
where the first inequality follows from the definition of $u'_i$ and the fact that $|T'_2| < c/2$, and the second from $A_2 \cup T'_2 \in \cM_i$. 
Now, by definition of $u'_i$, there exists $T'_1$ of size at most $c/2$ such that $u'_i(A_1 \setminus T'_2) = u_i(A_1 \setminus (T'_1 \cup T'_2))$. 
Plugging this into the chain of inequalities above, we get
\begin{align*}
u_i(A_2) \geq u_i(A_1 \setminus (T'_1 \cup T'_2)).
\end{align*}
Since $|T'_1| \le c/2$ and $|T'_2| < c/2$, we have $|T'_1 \cup T'_2| < c$. 
Thus, $\{A_1, A_2\}$ is a consensus halving up to $c$ goods, as claimed.
\end{proof}

Finally, we state a corollary on envy-freeness, a fundamental notion in fair division.
Suppose that the $n$~agents are partitioned into two groups arbitrarily, and consider an allocation $(A_1,A_2)$ of the $m$ goods to the two groups, where $A_j$ is allocated to group~$j$ for each $j \in [2]$.
For $c\in\mathbb{N}$, the allocation $(A_1,A_2)$ is said to be \emph{envy-free up to $c$ goods} if for each $j\in[2]$ and each agent~$i$ in group~$j$, there exists $R_{3-j}\subseteq A_{3-j}$ with $|R_{3-j}| \le c$ such that $u_i(A_j) \ge u_i(A_{3-j}\setminus R_{3-j})$.
Observe that a consensus halving up to $c$ goods directly gives rise to an allocation that is envy-free up to $c$ goods, by allocating the two bundles to the two groups arbitrarily.
Hence, \Cref{thm:halving} yields the following corollary.

\begin{corollary} \label{cor:ef}
For any $n$ agents with monotone utilities partitioned into two groups, there always exists an allocation that is envy-free up to $O(\sqrt{n\log n})$ goods.
\end{corollary}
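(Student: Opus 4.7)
The plan is to derive the corollary directly from \Cref{thm:halving} by observing that the consensus halving condition is strictly stronger than the envy-freeness condition needed here. First, I would set $c := O(\sqrt{n \log n})$ to be the bound from \Cref{thm:halving} and apply that theorem to the $n$ agents with their monotone utilities $u_1, \dots, u_n$ on the $m$ goods. This yields a partition $\{A_1, A_2\}$ of $[m]$ such that, for every agent $i \in [n]$, there exist $R_1 \subseteq A_1$ and $R_2 \subseteq A_2$ with $|R_1|, |R_2| \le c$ satisfying $u_i(A_1) \ge u_i(A_2 \setminus R_2)$ and $u_i(A_2) \ge u_i(A_1 \setminus R_1)$.

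Next, I would assign the two bundles to the two groups in an arbitrary way: say group~$1$ receives $A_1$ and group~$2$ receives $A_2$ (the opposite assignment works identically). To verify envy-freeness up to $c$ goods, fix any group $j \in [2]$ and any agent $i$ in that group. Envy-freeness up to $c$ goods for agent $i$ requires the existence of $R_{3-j} \subseteq A_{3-j}$ with $|R_{3-j}| \le c$ such that $u_i(A_j) \ge u_i(A_{3-j} \setminus R_{3-j})$. But this is exactly one of the two inequalities provided by the consensus halving guarantee for agent $i$: we simply take $R_{3-j}$ to be the set whose existence is asserted by \Cref{def:consesus-halving}. Hence the resulting allocation is envy-free up to $c = O(\sqrt{n \log n})$ goods.

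There is essentially no obstacle here, since the only content is checking that the two inequalities guaranteed by consensus halving are precisely what envy-freeness (with either assignment of bundles to groups) demands. The partition of agents into groups is arbitrary and does not interact with the argument, because the consensus halving condition holds simultaneously in both directions for each individual agent.
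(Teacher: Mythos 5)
Your proof is correct and matches the paper's approach exactly: the paper also observes, just before stating the corollary, that a consensus halving up to $c$ goods directly yields an envy-free-up-to-$c$-goods allocation by assigning the two bundles to the two groups arbitrarily. You have simply spelled out the verification that the two inequalities from \Cref{def:consesus-halving} instantiate the envy-freeness condition for whichever bundle each group receives.
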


\section{Open Questions}
\label{sec:conclusion_and_open_questions}

A conceptual contribution of this work is to introduce new questions in discrepancy theory for non-additive functions. We highlight two such questions and describe a fundamental challenge inherent in these generalizations: the inapplicability of standard techniques.

\paragraph{Closing the $\boldsymbol{\sqrt{\log n}}$ Gap.} 
An obvious open question is to close the $O_k(\sqrt{\log n})$ gap in our main bound (\Cref{thm:main}).
This gap is present even in the case $k = 2$ (\Cref{thm:main-informal}).

\begin{question} Is the discrepancy of every $1$-Lipschitz family $\mathcal{F} = \{f_1,\ldots,f_n: \{0,1\}^m \rightarrow \R\}$ of functions bounded as $O(\sqrt{n})$?
\end{question}

An affirmative answer would be a far-reaching generalization of Spencer's classical result. However, a fundamental hurdle is that most proofs of Spencer's theorem and its variants rely on the \emph{partial coloring method}, introduced by \citet{B81} and \citet{Gluskin89} and used extensively throughout discrepancy theory \citep{Spencer85, Bansal10, Rot13, Rot17, LovettMe15, BJM24}.

The partial coloring method seeks a coloring $\chi:[m] \rightarrow \{0,1,2\}$ that partitions the ground set into three parts.\footnote{Some works consider a continuous third color, but this does not avoid the main issue discussed below.} The goal is to color a large subset of elements (i.e., maximize $|\chi^{-1}(\{1,2\})|$) while minimizing the induced discrepancy,
$$\max_{i \in [n]} |f_i(\chi^{-1}(1)) - f_i(\chi^{-1}(2))|.$$
The algorithm then recurses on the uncolored elements in $\chi^{-1}(0)$. For additive functions or vector balancing, the total discrepancy can be controlled via the triangle inequality. 
Unfortunately, this approach fails for general non-additive $1$-Lipschitz functions.\footnote{For instance, one can construct vectors where two disjoint partial colorings both induce a small discrepancy, but their union induces an arbitrarily large discrepancy.} 
For example, this method is ineffective even for functions of the form $f_i(S) = \|\sum_{g \in S} a_{ig}\|_\infty$, where each $a_{ig}$ is a vector with $\|a_{ig}\|_\infty \leq 1$. Thus, new techniques are required.

Although the work of \citet{Banaszczyk98} and the recent breakthroughs by \citet{BJ25a, BJ25b} do not directly use the partial coloring lemma, their analyses still crucially rely on bounding the discrepancy by coloring subsets and applying the triangle inequality.

\paragraph{Sparse Families of Functions.} Another notable question is to study the \emph{sparse} or \emph{Beck--Fiala setting}, introduced by \citet{BeckFi81}, where each element affects only a few functions.

\begin{defn}
We say that an element $g \in [m]$ is \emph{relevant} for $f: \{0, 1\}^m \to \R$ if $f$ depends on the assignment of $g$. That is, there exists $S \subseteq [m]$ such that $f(S) \ne f(S \cup \{g\})$. Let $R(f)$ denote the set of all relevant elements of $f$.

For $t\in\N$, a family of functions $\mathcal{F} = \{f_1,\ldots,f_n: \{0,1\}^m \rightarrow \R\}$ is \emph{$t$-sparse} if each $g \in [m]$ is relevant for at most $t$ functions (i.e., $|\{i \in [n] \mid g \in R(f_i)\}| \leq t$).
\end{defn}
In the classic hypergraph setting, this means that each vertex belongs to at most $t$ hyperedges. 
A seminal result by \citet{BeckFi81} shows that for additive $t$-sparse functions, the discrepancy is at most $2t-1$. 
A major conjecture is whether this can be improved to $O(\sqrt{t})$, a problem almost resolved by the recent work of \citet{BJ25a, BJ25b}.
In contrast, the discrepancy of general Lipschitz $t$-sparse functions is wide open.

\begin{question}
    Let $\mathcal{F} = \{f_1,\ldots,f_n:\{0,1\}^m \rightarrow \R\}$ be a collection of $1$-Lipschitz $t$-sparse functions. Is $\mathrm{disc}(\mathcal{F}) = O_t(1)$?
\end{question}

That is, can the discrepancy be bounded by a function of $t$ alone, independent of $n$ and $m$? This is open even for $t=2$. A key challenge remains the inapplicability of the partial coloring method or iterative coloring approaches used in the original work of \citet{BeckFi81}.

Using our main theorem, we provide an improved bound for such $t$-sparse families.

\begin{theorem}
    Let $\mathcal{F} = \{f_1,\ldots,f_n:\{0,1\}^m \rightarrow \R\}$ be a collection of $1$-Lipschitz $t$-sparse functions. Then, $\mathrm{disc}(\mathcal{F}) = O\left( (mt)^{1/3} \cdot (\log(mt))^{1/3}\right)$.
\end{theorem}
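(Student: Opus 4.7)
The plan is to split the family by the size of $R(f_i)$, apply \Cref{lem:few-fractional} only to the ``big'' subfamily of functions, and control the ``small'' functions via a purely deterministic Lipschitz argument. The $t$-sparsity assumption forces the big subfamily to be small (so \Cref{lem:few-fractional} produces a fractional coloring with few fractional elements), while the $1$-Lipschitz property forces every small function to have automatically small discrepancy under \emph{any} coloring.

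Concretely, set $D := C \cdot (mt)^{1/3} (\log(mt))^{1/3}$ for a sufficiently large constant $C$, and use the threshold $L := D$. Let $B := \{i \in [n] : |R(f_i)| > L\}$. From the $t$-sparsity assumption, $\sum_i |R(f_i)| = \sum_g |\{i : g \in R(f_i)\}| \leq mt$, hence $|B| \leq mt/L$. Apply \Cref{lem:few-fractional} with $k = 2$ to $\{f_i\}_{i \in B}$ to obtain a fractional $2$-coloring $\ochi$ whose fractional set $S$ satisfies $|S| \leq |B| \leq mt/L$ and $\of_i(\ochi_1) = \of_i(\ochi_2)$ for every $i \in B$. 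Then randomly round: for each $g \in S$, set $\chi(g) = j$ independently with probability $\ochi(g)_j$, while each $g \notin S$ keeps the integral color prescribed by $\ochi$.

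For an index $i \notin B$, the quantity $\phi_i := f_i(\chi^{-1}(1)) - f_i(\chi^{-1}(2))$ depends only on $\chi|_{R(f_i)}$; since $\chi^{-1}(1) \cap R(f_i)$ and $\chi^{-1}(2) \cap R(f_i)$ partition $R(f_i)$, the $1$-Lipschitz property of $f_i$ immediately yields the deterministic bound $|\phi_i| \leq |R(f_i)| \leq L = D$, regardless of the realization of $\chi$. For $i \in B$, the balance $\of_i(\ochi_1) = \of_i(\ochi_2)$ ensures $\E[\phi_i] = 0$, and viewing $\phi_i$ as a function of the independent bits $(\chi(g))_{g \in S}$ shows it is $2$-Lipschitz (flipping one bit moves a single element between the two sides, so each of $f_i(\chi^{-1}(1))$ and $f_i(\chi^{-1}(2))$ changes by at most $1$). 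Applying \Cref{thm:bounded-diff} (after a trivial rescaling by $1/2$) gives $\Pr[|\phi_i| \geq D] \leq 2\exp(-D^2/(2|S|)) \leq 2\exp(-D^3/(2mt))$, which is smaller than $1/|B|$ for sufficiently large $C$ (using $|B| \leq mt$). A union bound over $i \in B$ then produces a single realization of $\chi$ satisfying $|\phi_i| \leq D$ for every $i \in [n]$, as desired.

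The step I expect to be the main obstacle is the realization that the small functions do \emph{not} need their bias $\of_i(\ochi_1) - \of_i(\ochi_2)$ controlled: since \Cref{lem:few-fractional} is applied only to the big subfamily, that bias can in principle be as large as $|R(f_i)| \leq L$ for small $i$, which naively suggests that randomized rounding would fail on the small side. The resolution is to bypass randomness entirely for small functions, using the deterministic bound $|\phi_i| \leq |R(f_i)| \leq L$, which is tight enough precisely because we chose $L = D$. Balancing the two competing constraints (the deterministic ``small bound'' $L \leq D$ and the McDiarmid-based ``big bound'' $\sqrt{(mt/L)\log(mt)} \lesssim D$) then uniquely pins down $L = D = \Theta((mt)^{1/3}(\log(mt))^{1/3})$, matching the claimed bound.
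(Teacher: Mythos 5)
Your proposal is correct and takes essentially the same approach as the paper: split $\cF$ by the size of $R(f_i)$, bound the discrepancy of the ``small'' functions deterministically by $|R(f_i)|$ using $1$-Lipschitzness, bound the ``big'' subfamily (of size at most $mt/L$ by $t$-sparsity) via the main discrepancy bound, and balance the two to pin down $L = \Theta((mt)^{1/3}(\log(mt))^{1/3})$. The only cosmetic difference is that you inline the proof of \Cref{thm:main} (invoking \Cref{lem:few-fractional} plus randomized rounding and McDiarmid directly on the big subfamily), whereas the paper simply invokes \Cref{thm:main} as a black box.
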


\begin{proof}
Let $c := (mt)^{1/3} \cdot (\log(mt))^{1/3}$, and let $\cF' := \{f_i \in \cF \mid R(f_i) \geq c\}$. 
Since $\cF$ is $t$-sparse, we have $|\cF'| \leq \frac{mt}{c}$. 
Applying \Cref{thm:main} on $\cF'$ yields a 2-coloring $\chi: [m] \to [2]$ with $\disc(\cF', \chi) \leq O\left(\sqrt{\frac{mt}{c} \cdot \log\left(\frac{mt}{c}\right)}\right) = O(c)$.
Finally, observe that any $f_i \in \cF \setminus \cF'$ satisfies $|f_i(\chi^{-1}(1)) - f_i(\chi^{-1}(2))| \leq |R(f_i)| < c$. Thus, $\disc(\cF, \chi) \leq O(c)$.  
\end{proof}

When $m = \Theta(n)$, this bound is a polynomial improvement over the general $\Omega(\sqrt{n})$ lower bound that holds without a sparsity assumption.

\vspace{2mm}
Finally, we outline two additional questions that our work suggests.

\paragraph{Extending Consensus Division to $\boldsymbol{k > 2}$ Parts.} Our proof of \Cref{thm:halving} is specific to $k = 2$, since we can create the (Lipschitz) functions $f_i$ that capture consensus division in that case. 
However, for general~$k$, it is unclear how to achieve this. 
Indeed, given a single part of the partition, one cannot even determine whether the partition constitutes a consensus division: it is necessary to obtain all the remaining $k - 1$ parts as well in order to determine this. 
It would be useful to understand if our proof can be extended to tackle this issue, or whether a completely new technique is required.

\paragraph{Computational Efficiency.} Our main theorems (\Cref{thm:main,thm:halving}) do \emph{not} come with efficient algorithms, unlike in the additive case. 
However, this might not be a coincidence: an approximate version of consensus halving for divisible goods (in the sense of \Cref{thm:consensus-div} with $k = 2$) is known to be PPA-complete \citep{FilosRatsikasGo18}. 
It remains an interesting open problem whether we can also prove such a hardness result for consensus halving up to $c$ goods, which would explain the lack of efficient algorithms in our setting. 

\vspace{2mm}
More broadly, we believe that discrepancy beyond additive functions is an important extension of the standard model and an exciting direction to pursue for future research.

\bibliography{refs}
\bibliographystyle{plainnat}

\appendix

\section{Lipschitzness of the Multilinear Extension}
\label{app:lipschitz}

We say that a function $h: [0, 1]^m \to \R$ is \emph{$1$-Lipschitz} if $|h(\bx) - h(\by)| \leq \|\bx - \by\|_1 $ for all $\bx, \by \in [0, 1]^m$. 
We state the following folklore result that, if a function is $1$-Lipschitz, then so is its multilinear extension.

\begin{lemma} \label{lem:multi-lipsc}
Let $f: \{0, 1\}^m \to \R$ be any $1$-Lipschitz function. 
Then, its multilinear extension $\of$ is also $1$-Lipschitz.
\end{lemma}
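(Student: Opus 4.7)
The plan is to exploit the defining feature of the multilinear extension: for any fixed values of the remaining coordinates, $\of$ is affine (in fact linear) in each single coordinate $x_g$. Writing $\bx^{g \to v}$ for the vector obtained from $\bx$ by replacing its $g$-th coordinate with $v$, conditioning on the value sampled for coordinate $g$ in $\cD_{\bx}$ gives
$$\of(\bx) = x_g \cdot \of(\bx^{g \to 1}) + (1 - x_g) \cdot \of(\bx^{g \to 0}),$$
so $\of$ is differentiable in $x_g$ with
$$\partial_g \of(\bx) = \of(\bx^{g \to 1}) - \of(\bx^{g \to 0}) = \E_{\bx' \sim \cD_{\bx^{g \to 0}}}\!\left[f(\bx' \cup \{g\}) - f(\bx' \setminus \{g\})\right].$$
The two sets appearing inside the expectation differ in at most one element, so the 1-Lipschitzness of $f$ on $\{0,1\}^m$ forces the integrand to lie in $[-1, 1]$. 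Hence $|\partial_g \of(\bx)| \le 1$ uniformly in $g$ and $\bx$.

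To conclude, I would run a one-coordinate-at-a-time telescoping argument: define $\bz^{(0)} := \bx$ and obtain $\bz^{(g)}$ from $\bz^{(g-1)}$ by replacing its $g$-th coordinate with $y_g$, so that $\bz^{(m)} = \by$. Because $\of$ is affine in coordinate $g$ with slope of absolute value at most $1$,
$$|\of(\bz^{(g)}) - \of(\bz^{(g-1)})| \le |y_g - x_g|.$$
Summing over $g \in [m]$ and applying the triangle inequality yields $|\of(\bx) - \of(\by)| \le \sum_{g \in [m]} |x_g - y_g| = \|\bx - \by\|_1$, which is precisely the $1$-Lipschitz property in the sense defined above.

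There is no substantive obstacle here; this is a standard folklore calculation. The only point worth being careful about is aligning the two notions of ``$1$-Lipschitz'' used in the statement: the hypothesis refers to Hamming distance on $\{0,1\}^m$, while the conclusion refers to $\ell_1$ distance on $[0,1]^m$. The bridge is exactly the bound $|\partial_g \of| \le 1$ established above, which translates the combinatorial Lipschitz constant of $f$ into a bound on the directional derivatives of $\of$ along each axis.
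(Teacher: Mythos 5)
Your proof is correct, but the route differs from the paper's. The paper couples the two product distributions $\cD_{\obx}$ and $\cD_{\oby}$ coordinate-by-coordinate so that $\Pr[x_g \ne y_g] = |\ox_g - \oy_g|$, then bounds $|\of(\obx) - \of(\oby)|$ by the expected Hamming distance of the coupled samples and appeals directly to Lipschitzness of $f$. You instead exploit the multilinear (hence per-coordinate affine) structure of $\of$: you bound each axis-aligned slope by $1$ using the one-flip Lipschitz bound on $f$, and then telescope coordinate by coordinate. Both are clean one-paragraph arguments; the coupling proof emphasizes the probabilistic semantics of $\of$ and would generalize to non-multilinear extensions that are still defined as expectations, while your argument is more elementary (no coupling needed) and makes explicit the point you flag at the end -- that the hypothesis is a Hamming-distance Lipschitz bound and the conclusion an $\ell_1$ Lipschitz bound, bridged by the slope bound $|\partial_g \of| \le 1$. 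One minor note: since you sample $\bx' \sim \cD_{\obx^{g\to 0}}$, the coordinate $g$ is already $0$ in $\bx'$, so $\bx' \setminus \{g\} = \bx'$ and the two sets in the integrand differ in exactly one element; this is what you intended, but it is worth stating so the ``$\le 1$'' bound is visibly an instance of the one-element Lipschitz hypothesis.
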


Before we prove the lemma, let us explain how it implies that the utility function $u_i$ constructed in the proof of \Cref{lem:few-fractional} satisfies Assumption~\ref{as:smooth-util} with $\delta = \eps/m$. 
To see this, consider any $A, B \in \cB([0,1])$ such that $\lambda(A \Delta B) \leq \delta$. We have 
\begin{align*}
\|\obx^A - \obx^B\|_1 = m \cdot \sum_{g \in [m]} \left|\lambda\left(A \cap \left[\frac{g-1}{m}, \frac{g}{m}\right]\right) - \lambda\left(B \cap \left[\frac{g-1}{m}, \frac{g}{m}\right]\right)\right| \leq m \cdot \delta = \eps.
\end{align*}
Applying \Cref{lem:multi-lipsc}, we immediately get that $|u_i(A) - u_i(B)| = |f_i(\obx^A) - f_i(\obx^B)| \leq \eps$.

We now prove \Cref{lem:multi-lipsc}. 
We will use standard probability definitions and notations; see, e.g., \citep{roch_mdp_2024}.

\begin{proof}[Proof of \Cref{lem:multi-lipsc}]
Consider any $\obx, \oby \in [0, 1]^m$. Recall that $\of(\obx) := \E_{\bx \sim \cD_{\obx}}[f(\bx)]$ and $\of(\oby) := \E_{\by \sim \cD_{\oby}}[f(\by)]$, where $\cD_{\obx}, \cD_{\oby}$ are product distributions whose expectations are $\obx, \oby$ respectively. 
For each $g \in [m]$, since $x_g, y_g$ are simply Bernoulli random variables with mean $\ox_g, \oy_g$ respectively, there is a coupling  $(x_g, y_g)$ such that $\Pr_{(x_g, y_g)}[x_g \ne y_g] = |\ox_g - \oy_g|$. Thus, we have
\begin{align*}
\left|\of(\obx) - \of(\oby)\right|
&= \left|\E_{x_1, \dots, x_m}[f(x_1, \dots, x_m)] - \E_{y_1, \dots, y_m}[f(y_1, \dots, y_m)]\right| \\
&= \left|\E_{(x_1, y_1), \dots, (x_m, y_m)}[f(x_1, \dots, x_m)] - \E_{(x_1, y_1), \dots, (x_m, y_m)}[f(y_1, \dots, y_m)]\right| \\
&= \left|\E_{(x_1, y_1), \dots, (x_m, y_m)}[f(x_1, \dots, x_m) - f(y_1, \dots, y_m)]\right| \\
&\leq \E_{(x_1, y_1), \dots, (x_m, y_m)}[\left|f(x_1, \dots, x_m) - f(y_1, \dots, y_m)\right|] \\
&\leq \E_{(x_1, y_1), \dots, (x_m, y_m)}[d((x_1, \dots, x_m), (y_1, \dots, y_m))] \\
&= \E_{(x_1, y_1), \dots, (x_m, y_m)}\left[\sum_{g\in[m]} \bone[x_g \ne y_g]\right] \\
&= \sum_{g\in[m]} \Pr_{(x_g, y_g)}[x_g \ne y_g] \\
&= \sum_{g \in [m]} |\ox_g - \oy_g| \\
&= \|\obx - \oby\|_1,
\end{align*}
where the first inequality follows from the triangle inequality and the second holds because $f$ is $1$-Lipschitz.
\end{proof}

\end{document}